\documentclass[preprint,12pt]{elsarticle}

\biboptions{comma}












\usepackage{amsmath}
\usepackage{amsfonts}
\usepackage{amssymb}
\usepackage{amsthm}



\usepackage{empheq}







\usepackage[backgroundcolor=gray!30,linecolor=black]{todonotes}

%



\theoremstyle{plain}
\newtheorem{theorem}{Theorem}[section]

\theoremstyle{definition}

\theoremstyle{remark}
\newtheorem{remark}[theorem]{Remark}

\setcounter{secnumdepth}{3} 
\numberwithin{equation}{section} 
\numberwithin{figure}{section}   






\newcommand{\vect}[1]{\mathbf{#1}}

\newcommand{\bu}{\vect{u}}

\newcommand{\bx}{\vect{x}}

\newcommand{\bomega}{\boldsymbol{\omega}}

\newcommand{\field}[1]{\mathbb{#1}}

\newcommand{\nZ}{\field{Z}}

\newcommand{\nR}{\field{R}}







\newcommand{\maps}{\rightarrow}













\begin{document}


\begin{frontmatter}
\title{A Blow-Up Criterion for the 3D Euler Equations Via the  Euler-Voigt Inviscid Regularization}
\date{July 27, 2015}

\author[unl]{Adam Larios\corref{cor1}}
\address[unl]{Department of Mathematics,
                University of Nebraska--Lincoln,
                203 Avery Hall,
        Lincoln, NE 68588--0130, USA}
\ead{alarios@unl.edu}
\cortext[cor1]{Corresponding author}

\author[tamu]{Edriss~S.~Titi}
\address[tamu]{Department of Mathematics,
                Texas A\&M University,
                3368 TAMU,
        College Station, TX 77843--3368, USA. Also,
        Department of Computer Science and Applied Mathematics,
Weizmann Institute of Science,
Rehovot 76100, Israel.
        }
\ead{titi@math.tamu.edu, edriss.titi@weizmann.ac.il}

\begin{abstract}
We propose a new blow-up criterion for the 3D Euler equations of incompressible fluid flows, based on the 3D Euler-Voigt inviscid regularization.  This criterion is similar in character to a criterion proposed in a previous work by the authors, but it is stronger, and better adapted for computational tests.   The 3D Euler-Voigt equations enjoy global well-posedness, and moreover are more tractable to simulate than the 3D Euler equations.  A major advantage of these new criteria is that one only needs to simulate the 3D Euler-Voigt, and not the 3D Euler equations, to test the blow-up criteria, for the 3D Euler equations, computationally.
\end{abstract}

\begin{keyword}
Euler-Voigt \sep Navier-Stokes-Voigt \sep Inviscid regularization \sep Turbulence models \sep $\alpha-$Models \sep Blow-up criterion for Euler.

\MSC
35B44 \sep 
35A01 \sep 
35Q30 \sep 
35Q31 \sep 
35Q35 \sep 
76B03 
%
\end{keyword}

\end{frontmatter}

\thispagestyle{empty}

\section{Introduction}\label{sec:Int}
\noindent

A major difficulty in the computational search for blow-up of the 3D incompressible Euler equations is that one must seemingly simulate the 3D Euler equations themselves to obtain information about  singularities.  Near the time of a potential singularity, sufficient accuracy of simulations of the 3D Euler equations can be challenging
to obtain, due to the need to resolve spatial derivatives that are potentially infinite.  However, two new blow-up criteria, one proved by the authors in \cite{Larios_Titi_2009},  and another provided in the present work, provide a path around this difficulty by using the Euler-Voigt inviscid regularization.  In particular, by tracking the $L^2$-norm of the vorticity of solutions to the 3D Euler-Voigt equations, as a certain regularizing parameter $\alpha$ tends to zero, these new criteria allow one to gather evidence for potential singularities of the 3D Euler equations by only simulating the 3D Euler-Voigt equations.  This is advantageous from a computational standpoint, because the $L^2$-norm of the spatial gradient of solutions to the Euler-Voigt equations is uniformly bounded in time, for any fixed value of the regularization parameter $\alpha$.  Furthermore, all higher-order norms grow at most algebraically in time \cite{Larios_Titi_2009}, which implies that pointwise spatial derivatives grow at most algebraically in time.  No such results are known for the 3D Euler equations.  This means that simulating the 3D Euler-Voigt equations is  computationally more tractable than simulating the 3D Euler equations, since achieving sufficient accuracy requires that simulations have high enough resolution to resolve spatial gradients.

In this work, we provide a new blow-up criterion that is similar in character to the criterion in \cite{Larios_Titi_2009}, but that has several advantages over the previous criterion.  Our main focus is on differences in the computational implementation of the two criteria.  However, one analytical advantage is that the new criterion is potentially\footnote{Of course, it may be that there are no singularities in the 3D Euler equations.} stronger than the previous criterion.  This is because the set of singularities it can detect is a (possibly proper) superset of the singularities detectable by the criterion in \cite{Larios_Titi_2009}.
From the standpoint of computational implementation, the criterion in \cite{Larios_Titi_2009} does not allow for simulations with adaptive or variable time-stepping, requiring a fixed time step in each simulation, and also agreement in time-steps across all simulations as the regularization parameter varies. (Interpolating in time is also an option, but introduces additional approximation.)  The new criterion only requires that the simulations end at a common final time.
Furthermore, the previous criterion requires computation and output of the $L^2$-norm of the velocity gradient (or the vorticity) at every time step, which can be costly, requiring additional memory storage, and---in parallel simulations---additional communications.  The new criterion requires this data only at the final time.
We note that both of these criteria are only known to be sufficient for blow-up; they are not known to be necessary for blow-up, unlike, e.g., the Beale-Kato-Majda criterion \cite{Beale_Kato_Majda_1984}.
Currently, both criteria are being tested computationally \cite{Larios_Petersen_Titi_Wingate_2015}.

The computational search for blow-up of the 3D incompressible Euler equations has a long history (see, e.g., \cite{Deng_Hou_Yu_2005,Gibbon_2008,Hou_2009,Hou_Li_2008_Blowup,Hou_Li_2008_Numerical,Kerr_1993}, and the references therein).  Traditionally, one attempts to identify singularities by means of blow-up criteria based on quantities arising from the 3D Euler equations.  There are many such criteria in the literature (see, e.g.,  \cite{Beale_Kato_Majda_1984,Constantin_Fefferman_1993,Constantin_Fefferman_Majda_1996,Ferrari_1993,Gibbon_Titi_2013_Blowup,Ponce_1985}, and the references therein).  Computational tests of the type of blow-up criteria described here and in \cite{Larios_Titi_2009} require more simulations (of the 3D Euler-Voigt equations) than tests requiring a single simulation of the 3D Euler equations, since one must run simulations for several values of the regularizing parameter $\alpha$.  However, these simulations require less resolution than simulations of the Euler equations as discussed above.


The Euler-Voigt inviscid regularization of the Euler equations is given by
\begin{subequations}\label{EV}
\begin{empheq}[left=\empheqlbrace]{align}
\label{EV_mo}
-\alpha^2\partial_t\nabla^2\bu
+\partial_t\bu +(\bu\cdot\nabla)\bu
+\nabla p&=0
,
\\
\label{EV_div}
\nabla\cdot \bu
&= 0,
\\
\label{EV_int}
\bu(\bx,0) &= \bu_0(\bx).
\end{empheq}
\end{subequations}
The parameter $\alpha>0$, having units of length, is the regularizing parameter.   Formally, setting $\alpha=0$, we recover the incompressible Euler equations.  The fluid velocity field, $\bu=\bu(\bx,t)$, and the fluid pressure, $p=p(\bx,t)$ are the unknown quantities. We consider system \eqref{EV} in a periodic box $\Omega:=\nR^3/\nZ^3=\equiv[0,1]^3$.  We assume that the spatial average of $\int_\Omega\bu_0(\bx)\,d\bx=0$.  With \eqref{EV_mo}, this implies $\int_\Omega\bu(\bx,t)\,d\bx=0$ for all $t$.  From now on, we denote by $\bu^\alpha$ the solution to \eqref{EV}, and by $\bu$ a  solution to the Euler equations (i.e., \eqref{EV} with $\alpha=0$), both starting from the same sufficiently smooth initial condition $\bu_0$.
We denote the vorticity $\bomega := \nabla\times\bu$, and $\bomega^\alpha := \nabla\times\bu^\alpha$.

The Euler-Voigt equations were first described and analyzied in \cite{Cao_Lunasin_Titi_2006}, where they were shown to be globally well-posed for all initial data $\bu_0\in H^1$ and all $t>0$.  We note that their viscous counterpart, called the Navier-Stokes-Voigt equations, were proposed and studied much earlier in \cite{Oskolkov_1973, Oskolkov_1982}, as a model for Kelvin-Voigt viscoelastic fluids.  The Euler-Voigt equations have been studied computationally in \cite{DiMolfetta_Krstlulovic_Brachet_2015,Larios_Petersen_Titi_Wingate_2015}.  The Euler-Voigt and Navier-Stokes-Voigt equations, along with extensions of these models, have been studied in both analytical and numerical contexts (see, e.g., \cite{Bohm_1992,Cao_Lunasin_Titi_2006,Catania_2009,Catania_Secchi_2009,DiMolfetta_Krstlulovic_Brachet_2015,Ebrahimi_Holst_Lunasin_2012,Kalantarov_Levant_Titi_2009,Kalantarov_Titi_2009,Khouider_Titi_2008,Kuberry_Larios_Rebholz_Wilson_2012,Larios_Lunasin_Titi_2015,Larios_Titi_2009,Levant_Ramos_Titi_2009,Olson_Titi_2007,Oskolkov_1973,Oskolkov_1982,Ramos_Titi_2010}).  

The following theorem was proved in \cite{Larios_Titi_2009} (see also  a similar theorem for the surface quasi-geostrophic (SQG) equations in \cite{Khouider_Titi_2008}).
\begin{theorem}[\cite{Larios_Titi_2009}]\label{thm_blow_up_old}
Assume $\bu_0\in H^s$, for some $s\geq3$, with $\nabla\cdot\bu_0=0$. Suppose there exists a $T^{*} >0$ such that the solutions $\bu^\alpha$ of \eqref{EV}, with initial data $\bu_0$,  satisfy
\begin{align}\label{old_blow_up}
   \sup_{t\in[0,T^{*}]}\limsup_{\alpha\maps0^+}\left(\alpha\|\nabla \bu^\alpha(t)\|_{L^2}\right)>0.
\end{align}
Then the 3D Euler equations, with initial data $\bu_0$, develop a singularity within the interval $[0,T^{*}]$.
\end{theorem}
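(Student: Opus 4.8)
The plan is to prove the contrapositive: assuming the 3D Euler equations with data $\bu_0$ do \emph{not} develop a singularity on $[0,T^*]$, so that a classical solution $\bu\in C([0,T^*];H^s)$ with $s\ge3$ exists, I will show that the quantity in \eqref{old_blow_up} necessarily vanishes. Two exact energy identities anchor the argument. Testing the momentum equation \eqref{EV_mo} with $\bu^\alpha$ and using $\nabla\cdot\bu^\alpha=0$ to annihilate the pressure and inertial terms gives the Euler--Voigt energy law
\begin{align}\label{eq:voigt-energy}
\normLp{2}{\bu^\alpha(t)}^2+\alpha^2\normLp{2}{\nabla\bu^\alpha(t)}^2=\normLp{2}{\bu_0}^2+\alpha^2\normLp{2}{\nabla\bu_0}^2,
\end{align}
valid for all $t\ge0$, while the same computation with $\alpha=0$ yields the conservation of energy $\normLp{2}{\bu(t)}=\normLp{2}{\bu_0}$ for the Euler flow.

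The core of the proof is an estimate for the difference $\bw:=\bu^\alpha-\bu$, which satisfies $\bw(0)=0$ and, upon subtracting the two momentum equations and writing the difference of nonlinearities as $(\bu^\alpha\cdot\nabla)\bw+(\bw\cdot\nabla)\bu$,
\begin{align}\label{eq:diffeq}
\partial_t\bw-\alpha^2\partial_t\nabla^2\bu^\alpha+(\bu^\alpha\cdot\nabla)\bw+(\bw\cdot\nabla)\bu+\nabla q=0,
\end{align}
with $q:=p^\alpha-p$. I would take the $L^2$ inner product of \eqref{eq:diffeq} with $\bw$: the pressure term and $\ip{(\bu^\alpha\cdot\nabla)\bw}{\bw}$ vanish by incompressibility, and one integration by parts turns the Voigt term into $\tfrac{\alpha^2}{2}\tfrac{d}{dt}\normLp{2}{\nabla\bw}^2+\alpha^2\ip{\partial_t\nabla\bu}{\nabla\bw}$. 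This yields
\begin{align}\label{eq:energy-balance}
\frac12\frac{d}{dt}\Big(\normLp{2}{\bw}^2+\alpha^2\normLp{2}{\nabla\bw}^2\Big)=-\ip{(\bw\cdot\nabla)\bu}{\bw}-\alpha^2\ip{\partial_t\nabla\bu}{\nabla\bw}.
\end{align}
The decisive structural feature is that the nonlinearity is now tested against the \emph{smooth} field $\bu$ rather than the merely $H^1$-controlled $\bu^\alpha$, so that $\abs{\ip{(\bw\cdot\nabla)\bu}{\bw}}\le\normLp{\infty}{\nabla\bu}\,\normLp{2}{\bw}^2$, and the source term is handled by Cauchy--Schwarz and Young as $\alpha^2\abs{\ip{\partial_t\nabla\bu}{\nabla\bw}}\le\tfrac{\alpha^2}{2}\normLp{2}{\partial_t\nabla\bu}^2+\tfrac{\alpha^2}{2}\normLp{2}{\nabla\bw}^2$. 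Both $\normLp{\infty}{\nabla\bu}$ and $\normLp{2}{\partial_t\nabla\bu}$ are finite, uniformly on $[0,T^*]$, because $\bu\in C([0,T^*];H^s)$ with $s\ge3$ and $\partial_t\bu=-\mathbb{P}[(\bu\cdot\nabla)\bu]\in C([0,T^*];H^{s-1})$, where $\mathbb{P}$ is the Leray projector.

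Setting $E(t):=\normLp{2}{\bw(t)}^2+\alpha^2\normLp{2}{\nabla\bw(t)}^2$ with $E(0)=0$, \eqref{eq:energy-balance} reduces to $\tfrac{d}{dt}E\le C_1 E+C_2\alpha^2$ with $C_1,C_2$ depending only on $T^*$ and $\bu$, so Gr\"onwall's inequality gives $\sup_{t\in[0,T^*]}E(t)\le C\alpha^2$; in particular $\sup_{t\in[0,T^*]}\normLp{2}{\bu^\alpha(t)-\bu(t)}\le C\alpha$. Feeding this back into \eqref{eq:voigt-energy}, I use energy conservation for $\bu$ and the reverse triangle inequality, together with the uniform $L^2$ bound on $\bu^\alpha$ from \eqref{eq:voigt-energy}, to estimate $\abs{\normLp{2}{\bu_0}^2-\normLp{2}{\bu^\alpha(t)}^2}=\abs{\normLp{2}{\bu(t)}^2-\normLp{2}{\bu^\alpha(t)}^2}\le C'\alpha$ uniformly in $t$. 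Solving \eqref{eq:voigt-energy} for $\alpha^2\normLp{2}{\nabla\bu^\alpha(t)}^2$ then gives $\alpha^2\normLp{2}{\nabla\bu^\alpha(t)}^2\le C'\alpha+\alpha^2\normLp{2}{\nabla\bu_0}^2$, so that $\alpha\normLp{2}{\nabla\bu^\alpha(t)}\to0$ uniformly on $[0,T^*]$ as $\alpha\to0^+$. Hence $\sup_{t\in[0,T^*]}\limsup_{\alpha\to0^+}(\alpha\normLp{2}{\nabla\bu^\alpha(t)})=0$, contradicting \eqref{old_blow_up}, which proves the theorem by contraposition.

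I expect the main obstacle to lie in the difference estimate \eqref{eq:energy-balance}: the whole argument hinges on arranging the subtracted nonlinearity so that the derivative falls on the smooth Euler solution, and on confirming that the no-blow-up hypothesis supplies \emph{exactly} the regularity ($\normLp{\infty}{\nabla\bu}$ and $\normLp{2}{\partial_t\nabla\bu}$ bounded) needed to close the Gr\"onwall loop; the remaining steps are either exact identities or routine Young/Cauchy--Schwarz bounds.
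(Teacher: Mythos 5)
Your proof is correct and follows essentially the same route as the paper: a contrapositive argument combining the Euler--Voigt energy identity \eqref{EV_energy_equality} with the $O(\alpha)$ convergence estimate of Theorem \ref{convergence} (which you re-derive via the Gr\"onwall argument for $\bw=\bu^\alpha-\bu$ rather than cite) to force $\alpha\|\nabla\bu^\alpha(t)\|_{L^2}\to 0$. In fact you prove the stronger uniform-in-$t$ statement, i.e.\ the paper's \emph{new} criterion \eqref{blowup_new}, from which the cited criterion \eqref{old_blow_up} follows immediately.
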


\begin{remark}
Since $\nabla\cdot\bu^\alpha=0$, integration by parts can be used to show that 
\begin{align}
   \|\nabla\bu^\alpha(t)\|_{L^2}
   \equiv
   \|\bomega^\alpha(t)\|_{L^2}.
\end{align}
Therefore, in line with other blow-up criteria in the literature, \eqref{old_blow_up} can be seen as a condition on the vorticity, albeit from the 3D Euler-Voigt equations rather than the 3D Euler equations.
\end{remark}

A technical difficulty arises in computational tests of Theorem \ref{thm_blow_up_old}. Mathematically, one may imagine fixing a $t>0$ and computing
\begin{align}\label{blow_up_quantity_1}
\limsup_{\alpha\maps0^+}\left(\alpha\|\nabla \bu^\alpha(t)\|_{L^2}\right).
\end{align}
However, computationally, it is more natural to first fix an $\alpha>0$,  as a parameter, and then to compute $\bu^\alpha(t)$ as $t$ increases up to a time $T$ (e.g., by a standard time-stepping method).  Therefore, to construct curves of $\alpha$ vs. $\alpha\|\nabla \bu^\alpha(t)\|_{L^2}$ for each fixed $t$, one must jump from solution to solution as $\alpha$ varies.  This gives rise to some of the technical issues discussed above. However, suppose for a moment that one is allowed to commute the two limiting operations in \eqref{old_blow_up}.  In this case, one would then be interested whether
\begin{align}\label{blow_up_max_first}
\limsup_{\alpha\maps0^+}\Big(\alpha\sup_{t\in[0,T^{*}]}\|\nabla \bu^\alpha(t)\|_{L^2}\Big)>0.
\end{align}
The quantity in \eqref{blow_up_max_first} is arguably easier to track, as discussed above.
It is the purpose of this work to show rigorously that \eqref{blow_up_max_first} implies that the 3D Euler equations develop a singularity within the interval $[0,T^*]$.

\section{Notation and Preliminary Results}\label{sec:pre}
We denote by $L^p$ and $H^s$ the usual Lebesgue and Sobolev spaces over the periodic domain $\Omega\equiv[0,1]^3:=\nR^3/\nZ^3$, respectively.  It is a classical result (see, e.g., \cite{Majda_Bertozzi_2002,Marchioro_Pulvirenti_1994}) that, for initial data $\bu_0\in H^3$ satisfying $\nabla\cdot\bu_0=0$, a unique strong solution $\bu$ of the 3D Euler equations exists and is unique on a maximal time interval that we denote by $[0,T^*)$.  Moreover, one has
\begin{align}\label{Euler_energy_equality}
\|\bu(t)\|_{L^2} = \|\bu_0\|_{L^2} \text{ on } [0,T^*).
\end{align}
Equation \eqref{Euler_energy_equality} holds under weaker conditions on the smoothness of the solutions of the 3D Euler equations, as it was conjectured by Onsager (see, e.g., \cite{Cheskidov_Constantin_Friedlander_Shvydkoy,Constantin_E_Titi_1994,Eyink_1994,Onsager_1949}).  However, the existence of such weak solutions for arbitrary admissible initial data is still out of reach.  In \cite{Bardos_Titi_2010}, it was shown that a certain class of shear flows are weak solutions in $L^\infty((0,T);L^2)$ that conserve energy.  Furthermore, families of weak solutions that do not satisfy the regularity assumed in the Onsager conjecture have been constructed that do not satisfy \eqref{EV_energy_equality} \cite{Buckmaster_2015,DeLellis_Szekelyhidi_2010_admissibility,DeLellis_Szekelyhidi_2014_Onsager,Isett_2013_thesis}.
 The following ``$\alpha$-energy equality'' was proven in \cite{Cao_Lunasin_Titi_2006}.
\begin{theorem}\label{thm_energy_equality}
Let $\bu_0\in H^1$ with $\nabla\cdot\bu_0=0$, and let $\bu^\alpha$ be the corresponding solution to \eqref{EV}.  Then, for any $t\in\nR$,
\begin{align}\label{EV_energy_equality}
   \|\bu^\alpha(t)\|_{L^2}^2 +\alpha^2\|\nabla\bu^\alpha(t)\|_{L^2}^2
   =
   \|\bu_0\|_{L^2}^2+\alpha^2\|\nabla\bu_0\|_{L^2}^2.
\end{align}
\end{theorem}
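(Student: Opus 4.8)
The plan is to prove \eqref{EV_energy_equality} by the standard energy method: take the $L^2(\Omega)$ inner product of the momentum equation \eqref{EV_mo} with the solution $\bu^\alpha$ and integrate over $\Omega$. This produces four terms. First, $\int_\Omega(\partial_t\bu^\alpha)\cdot\bu^\alpha\,d\bx=\tfrac12\frac{d}{dt}\|\bu^\alpha\|_{L^2}^2$. For the Voigt term I would integrate by parts in space, where periodicity kills all boundary contributions, to obtain
\begin{align*}
-\alpha^2\int_\Omega(\partial_t\nabla^2\bu^\alpha)\cdot\bu^\alpha\,d\bx
=\alpha^2\int_\Omega\nabla(\partial_t\bu^\alpha):\nabla\bu^\alpha\,d\bx
=\frac{\alpha^2}{2}\frac{d}{dt}\|\nabla\bu^\alpha\|_{L^2}^2.
\end{align*}
The pressure term vanishes, since $\int_\Omega(\nabla p)\cdot\bu^\alpha\,d\bx=-\int_\Omega p\,(\nabla\cdot\bu^\alpha)\,d\bx=0$ by \eqref{EV_div}, and the nonlinear term vanishes by the usual cancellation
\begin{align*}
\int_\Omega\big((\bu^\alpha\cdot\nabla)\bu^\alpha\big)\cdot\bu^\alpha\,d\bx
=\tfrac12\int_\Omega(\bu^\alpha\cdot\nabla)|\bu^\alpha|^2\,d\bx
=-\tfrac12\int_\Omega(\nabla\cdot\bu^\alpha)|\bu^\alpha|^2\,d\bx=0,
\end{align*}
again using \eqref{EV_div} and periodicity. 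Collecting the surviving terms gives $\frac{d}{dt}\big(\|\bu^\alpha\|_{L^2}^2+\alpha^2\|\nabla\bu^\alpha\|_{L^2}^2\big)=0$, and integrating from $0$ to any $t\in\nR$ yields \eqref{EV_energy_equality}.

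The preceding manipulations are only formal: they presuppose that $\bu^\alpha$ is an admissible test function and that the time derivatives may be pulled outside the spatial integrals, whereas for $\bu_0\in H^1$ the solution is a priori known to possess only that much regularity. To make the argument rigorous I would perform the computation at the level of a Galerkin approximation rather than on the equation itself. Expanding in divergence-free Fourier modes on $\Omega$ (equivalently, eigenfunctions of the Stokes operator), the pressure drops out against the divergence-free test functions, the Galerkin coefficients solve a smooth finite-dimensional system, and the three identities above then hold \emph{exactly}. Hence the $n$-th approximation $\bu^\alpha_n$ satisfies $\|\bu^\alpha_n(t)\|_{L^2}^2+\alpha^2\|\nabla\bu^\alpha_n(t)\|_{L^2}^2=\|P_n\bu_0\|_{L^2}^2+\alpha^2\|\nabla P_n\bu_0\|_{L^2}^2$, where $P_n$ denotes the projection onto the first $n$ modes.

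It then remains to pass to the limit $n\to\infty$, and this is where I expect the only real difficulty to lie. The key structural feature of the Voigt regularization is that the Galerkin energy identity itself furnishes a bound on $\bu^\alpha_n$ in $L^\infty([0,T];H^1)$ that is uniform in $n$, and solving the equation for $\partial_t\bu^\alpha_n=(I-\alpha^2\nabla^2)^{-1}\big(\text{nonlinear term}\big)$ provides the companion control on the time derivative; together these yield, via Aubin--Lions, strong convergence $\bu^\alpha_n\to\bu^\alpha$ in $C([0,T];H^1)$ along a subsequence, the limit being the unique solution. The hard part is precisely securing \emph{strong} rather than merely weak $H^1$-convergence: under weak convergence the equality \eqref{EV_energy_equality} would degrade to an inequality, since norms are only weakly lower semicontinuous. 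The $H^1$-coercivity built into the Voigt term is exactly what rescues the equality, allowing both $\|\bu^\alpha_n(t)\|_{L^2}^2$ and $\alpha^2\|\nabla\bu^\alpha_n(t)\|_{L^2}^2$ to pass to their limits pointwise in $t$, while $P_n\bu_0\to\bu_0$ in $H^1$ handles the right-hand side.
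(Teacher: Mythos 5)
The paper does not actually prove this theorem; it imports it verbatim from the cited reference \cite{Cao_Lunasin_Titi_2006}, so there is no in-paper proof to compare against. Your argument is the standard one (and is essentially what the cited reference does): the formal energy computation is correct --- testing \eqref{EV_mo} with $\bu^\alpha$, the Voigt term produces $\tfrac{\alpha^2}{2}\tfrac{d}{dt}\|\nabla\bu^\alpha\|_{L^2}^2$ after integration by parts, and the pressure and advection terms vanish by \eqref{EV_div} and periodicity --- and you are right that for $\bu_0\in H^1$ the computation must be justified rather than performed directly on the equation as written.

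One caveat on the limit passage: Aubin--Lions, applied with the Galerkin sequence bounded in $L^\infty([0,T];H^1)$ and its time derivative bounded in some weaker space, yields strong convergence only in $C([0,T];H^s)$ for $s<1$; it does not by itself deliver the strong $C([0,T];H^1)$ convergence you need to turn the inequality from weak lower semicontinuity back into an equality. The standard fixes are either (i) to show directly that the Galerkin sequence is Cauchy in $C([0,T];H^1)$ by an energy estimate on differences of approximants (this is the route taken in \cite{Cao_Lunasin_Titi_2006}), or (ii) to bypass Galerkin at this stage entirely: writing the equation as $\partial_t\bu^\alpha=-(I+\alpha^2 A)^{-1}P\big((\bu^\alpha\cdot\nabla)\bu^\alpha\big)$, where $A$ is the Stokes operator and $P$ the Leray projector, one sees that $(\bu^\alpha\cdot\nabla)\bu^\alpha\in H^{-1/2}$ for $\bu^\alpha\in H^1$, hence $\partial_t\bu^\alpha\in C([0,T];H^{3/2})\subset C([0,T];H^1)$, so the chain rule $\tfrac{d}{dt}\big(\|\bu^\alpha\|_{L^2}^2+\alpha^2\|\nabla\bu^\alpha\|_{L^2}^2\big)=2\langle\partial_t\bu^\alpha,\bu^\alpha\rangle+2\alpha^2\langle\nabla\partial_t\bu^\alpha,\nabla\bu^\alpha\rangle$ is fully justified on the solution itself, and the cancellation $\int_\Omega\big((\bu^\alpha\cdot\nabla)\bu^\alpha\big)\cdot\bu^\alpha\,d\bx=0$ extends to $H^1$ divergence-free fields by density. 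With either repair your proof is complete; as written, the single sentence attributing strong $H^1$ convergence to Aubin--Lions is the one step that would not survive scrutiny.
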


The following convergence theorem was proven in \cite{Larios_Titi_2009}.
\begin{theorem}\label{convergence}
Let $\bu_0\in H^s$, $s\geq3$ with $\nabla\cdot\bu_0=0$, and let $[0,T^*)$ be the corresponding  maximal interval  of existence and uniqueness of the solution, $\bu$, to the 3D Euler equations.  Choose $T\in[0,T^*)$.  Then there exists a constant $C>0$, which depends on $\sup_{0\le t \le T} \|u(t)\|_{H^3}$, such that for all $t\in[0,T]$,
\begin{align}\label{conv_claim_full}
\|\bu(t)-\bu^\alpha(t)\|_{L^2}^2
+\alpha^2\|\nabla(\bu(t)- \bu^\alpha(t))\|_{L^2}^2
\leq
C\alpha^2(e^{Ct}-1).
\end{align}
\end{theorem}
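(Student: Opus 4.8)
The plan is to run an energy estimate for the difference $\bw := \bu - \bu^\alpha$ and close it with Grönwall's inequality. The key preliminary observation is that the Euler solution $\bu$ satisfies the Euler--Voigt momentum balance up to an explicit forcing: adding and subtracting $\alpha^2\partial_t\nabla^2\bu$ in the Euler equation gives $-\alpha^2\partial_t\nabla^2\bu + \partial_t\bu + (\bu\cdot\nabla)\bu + \nabla p = -\alpha^2\partial_t\nabla^2\bu$. Subtracting \eqref{EV_mo} from this, and using the algebraic identity $(\bu\cdot\nabla)\bu - (\bu^\alpha\cdot\nabla)\bu^\alpha = (\bu\cdot\nabla)\bw + (\bw\cdot\nabla)\bu - (\bw\cdot\nabla)\bw$, I would obtain an evolution equation for $\bw$ whose sole right-hand side is the forcing $-\alpha^2\partial_t\nabla^2\bu$.

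Next I would test this equation against $\bw$ in $L^2$. Since $\nabla\cdot\bu = \nabla\cdot\bu^\alpha = 0$ we have $\nabla\cdot\bw = 0$, so after integration by parts on the periodic domain the two time-derivative terms combine into $\tfrac{1}{2}\tfrac{d}{dt}\big(\|\bw\|_{L^2}^2 + \alpha^2\|\nabla\bw\|_{L^2}^2\big)$, the pressure term $\langle \nabla(p-p^\alpha),\bw\rangle$ vanishes, and the transport terms $\langle(\bu\cdot\nabla)\bw,\bw\rangle$ and $\langle(\bw\cdot\nabla)\bw,\bw\rangle$ both vanish by the standard divergence-free cancellation. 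The only surviving nonlinear contribution is $\langle(\bw\cdot\nabla)\bu,\bw\rangle$, which I bound by $\|\nabla\bu\|_{L^\infty}\|\bw\|_{L^2}^2$, while the forcing becomes $\alpha^2\langle\nabla\partial_t\bu,\nabla\bw\rangle$ after one integration by parts and is controlled by Young's inequality as $\tfrac{\alpha^2}{2}\|\nabla\partial_t\bu\|_{L^2}^2 + \tfrac{\alpha^2}{2}\|\nabla\bw\|_{L^2}^2$. Note that, pleasantly, no bound on $\bu^\alpha$ itself is needed, since every $\bu^\alpha$-dependence has been folded into $\bw$.

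Writing $y(t) := \|\bw(t)\|_{L^2}^2 + \alpha^2\|\nabla\bw(t)\|_{L^2}^2$, these estimates give a differential inequality of the form $\tfrac{dy}{dt} \le C_1 y + C_2\alpha^2$, where $C_1$ absorbs $\|\nabla\bu\|_{L^\infty}$ together with the $\tfrac{\alpha^2}{2}\|\nabla\bw\|_{L^2}^2$ term (bounded by $\tfrac12 y$), and $C_2\alpha^2$ comes from $\alpha^2\|\nabla\partial_t\bu\|_{L^2}^2$. Because both solutions start from the same initial data, $\bw(0)=0$ and hence $y(0)=0$, so Grönwall's inequality yields $y(t)\le \tfrac{C_2}{C_1}\alpha^2\big(e^{C_1 t}-1\big)$, which is exactly the asserted bound \eqref{conv_claim_full} after renaming the constant $C$.

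The main obstacle is producing the constant $C$ with the advertised dependence, namely bounding $\|\nabla\bu\|_{L^\infty}$ and $\|\nabla\partial_t\bu\|_{L^2}$ purely in terms of $\sup_{0\le t\le T}\|\bu(t)\|_{H^3}$. For the first, the Sobolev embedding $H^3(\Omega)\hookrightarrow W^{1,\infty}(\Omega)$ in three dimensions gives $\|\nabla\bu\|_{L^\infty}\le C\|\bu\|_{H^3}$. For the second, I would use the Euler equation to write $\partial_t\bu = -\mathbb{P}\big[(\bu\cdot\nabla)\bu\big]$, with $\mathbb{P}$ the Leray projector, so that $\|\nabla\partial_t\bu\|_{L^2}\le\|\partial_t\bu\|_{H^1}\le C\|(\bu\cdot\nabla)\bu\|_{H^1}\le C\|\bu\|_{H^3}^2$, invoking the boundedness of $\mathbb{P}$ on Sobolev spaces and the algebra property of $H^2(\Omega)$. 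This is precisely where the hypothesis $s\ge 3$ is used. A secondary point is that the manipulations above are a priori formal and must be justified at the regularity level furnished by the well-posedness theory for the Euler equations and for \eqref{EV}; since $\bu\in C([0,T];H^3)$ and $\bu^\alpha\in H^1$ with the stated time-continuity, all of the pairings and integrations by parts are legitimate.
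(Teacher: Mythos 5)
Your proposal is correct and follows essentially the same route as the proof of this theorem, which the paper itself does not reproduce but imports from the cited reference \cite{Larios_Titi_2009}: there, too, one writes the equation for the difference with $-\alpha^2\partial_t\nabla^2\bu$ as a forcing term, tests against the difference so that the transport terms cancel and only $\left<(\bw\cdot\nabla)\bu,\bw\right>$ and the $\alpha^2$-forcing survive, and closes with Gr\"onwall using $\bw(0)=0$ and the $H^3$ bounds on $\bu$ to control $\|\nabla\bu\|_{L^\infty}$ and $\|\nabla\partial_t\bu\|_{L^2}$. The only cosmetic point is the final bookkeeping that converts $\tfrac{C_2}{C_1}\alpha^2(e^{C_1t}-1)$ into the stated form $C\alpha^2(e^{Ct}-1)$, which is harmless since $C_1\geq 1$ in your normalization.
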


\section{An Improved Blow-up Criterion}\label{sec:blowup}
Let $T>0$ be given.  Assume that a given solution to the Euler equations is smooth on $[0,T]$, so that in particular, \eqref{Euler_energy_equality} holds.  We emphasize that \eqref{Euler_energy_equality} depends on the regularity of the Euler equations, and if a finite-time singularity develops, \eqref{Euler_energy_equality} might not hold.

\begin{theorem}
Let $\bu_0\in H^s$, $s\geq3$, with $\nabla\cdot\bu_0=0$, and let $\bu^\alpha$ be the corresponding unique solution of \eqref{EV}.  Suppose that
 \begin{align}\label{blowup_new}
 \limsup_{\alpha\maps0^+}\sup_{t\in [0,T]} \alpha\|\nabla\bu^\alpha(t)\|_{L^2}>0,
\end{align}
for some $T>0$.  Then the unique solution to the 3D Euler equations, with initial data $\bu_0$,  must develop a singularity within the interval $[0,T]$.
\end{theorem}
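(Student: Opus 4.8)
The plan is to argue by contradiction, reducing the statement to the time-uniform convergence estimate of Theorem~\ref{convergence}. So I would suppose, contrary to the conclusion, that the unique solution $\bu$ of the 3D Euler equations with initial data $\bu_0$ develops no singularity on $[0,T]$. Then $T$ lies strictly inside the maximal interval of existence, i.e.\ $T<T^*$, and $\bu\in C([0,T];H^3)$. Since $[0,T]$ is compact and $t\mapsto\|\bu(t)\|_{H^3}$ is continuous, the quantity $M:=\sup_{t\in[0,T]}\|\nabla\bu(t)\|_{L^2}$ is finite, and the constant $C$ furnished by Theorem~\ref{convergence} (which depends only on $\sup_{0\le t\le T}\|\bu(t)\|_{H^3}$) is a fixed finite number.

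Next I would invoke Theorem~\ref{convergence}. Discarding the nonnegative $L^2$ term on the left of \eqref{conv_claim_full} and using $e^{Ct}-1\le e^{CT}-1$ for $t\in[0,T]$, the estimate yields the time-uniform bound
\begin{align}
\sup_{t\in[0,T]}\alpha\|\nabla(\bu(t)-\bu^\alpha(t))\|_{L^2}
\le
C^{1/2}\,\alpha\,(e^{CT}-1)^{1/2}.
\end{align}
The essential feature is that the right-hand side is independent of $t$; this time-uniformity is exactly what will permit taking the supremum in $t$ before the limit in $\alpha$.

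I would then split $\bu^\alpha$ via the triangle inequality, $\alpha\|\nabla\bu^\alpha(t)\|_{L^2}\le\alpha\|\nabla\bu(t)\|_{L^2}+\alpha\|\nabla(\bu(t)-\bu^\alpha(t))\|_{L^2}$, and take the supremum over $t\in[0,T]$ of both sides to obtain
\begin{align}
\sup_{t\in[0,T]}\alpha\|\nabla\bu^\alpha(t)\|_{L^2}
\le
\alpha M + C^{1/2}\,\alpha\,(e^{CT}-1)^{1/2}.
\end{align}
Both terms on the right tend to $0$ as $\alpha\maps0^+$, so the $\limsup$ of the left-hand side vanishes, contradicting hypothesis \eqref{blowup_new}. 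Hence the Euler solution must lose regularity somewhere in $[0,T]$.

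The argument is structurally identical to that of Theorem~\ref{thm_blow_up_old}; the only point requiring care is the order of the two limiting operations. The main obstacle is therefore not a difficult estimate but the observation that the bound from Theorem~\ref{convergence} is uniform in time on the compact interval $[0,T]$, which is precisely what legitimizes pulling $\sup_{t}$ inside before sending $\alpha\maps0^+$. This is also the reason the new criterion is at least as strong as the previous one, since in general $\limsup_{\alpha}\sup_{t}\ge\sup_{t}\limsup_{\alpha}$, so positivity of the quantity in \eqref{blowup_new} is a weaker demand than positivity of the quantity in \eqref{old_blow_up}.
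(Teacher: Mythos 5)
Your proof is correct, but it takes a genuinely different route from the paper's. You keep the $\alpha^2\|\nabla(\bu-\bu^\alpha)\|_{L^2}^2$ portion of the convergence estimate \eqref{conv_claim_full} and combine it, via the triangle inequality, with the finiteness of $\sup_{t\in[0,T]}\|\nabla\bu(t)\|_{L^2}$ on the compact interval of regularity; this yields the $t$-uniform bound $\sup_{t\in[0,T]}\alpha\|\nabla\bu^\alpha(t)\|_{L^2}=O(\alpha)$. The paper instead discards the gradient portion of \eqref{conv_claim_full} and keeps only the $L^2$ portion: it lower-bounds $\|\bu^\alpha(t)\|_{L^2}$ by $\|\bu_0\|_{L^2}-C\alpha(e^{CT}-1)^{1/2}$ using the Euler energy conservation \eqref{Euler_energy_equality} (valid precisely because the solution is assumed smooth), and then feeds this into the Voigt $\alpha$-energy equality \eqref{EV_energy_equality} to conclude $\alpha^2\|\nabla\bu^\alpha(t)\|_{L^2}^2\le \alpha^2\|\nabla\bu_0\|_{L^2}^2+O(\alpha)$, uniformly in $t$. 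Both arguments hinge on the same essential observation you identified --- the bound is uniform on $[0,T]$, which legitimizes taking $\sup_t$ before $\limsup_{\alpha\to0^+}$ --- and both are complete. Your version is more direct and actually gives a sharper decay rate ($O(\alpha)$ versus the paper's $O(\alpha^{1/2})$ for $\sup_t\alpha\|\nabla\bu^\alpha(t)\|_{L^2}$), while the paper's detour through the two energy balances uses only the weaker $L^2$-part of Theorem~\ref{convergence} and makes explicit the role of energy conservation, which is the quantity the surrounding discussion (Onsager-type considerations) emphasizes can fail at a singularity. Your closing remark comparing \eqref{blowup_new} with \eqref{old_blow_up} via $\limsup_\alpha\sup_t\ge\sup_t\limsup_\alpha$ matches the paper's own comparison in Section 3.1.
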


\begin{proof}
We prove the contrapositive.  Assume that $\bu$ is a solution of the 3D Euler equations, with initial data $\bu_0\in H^s$, $s\geq3$, that remains smooth on the interval $[0,T]$.  In particular, the smoothness implies that \eqref{Euler_energy_equality} holds.  From \eqref{conv_claim_full} there exists a constant $C>0$, depending on $\sup_{0\le t \le T} \|u(t)\|_{H^3}$, such that
\begin{align}\label{conv_claim}
\|\bu^\alpha(t)\|_{L^2}
&\geq
\|\bu(t)\|_{L^2}-C\alpha(e^{Ct}-1)^{1/2}
\geq
\|\bu(t)\|_{L^2}-C\alpha(e^{CT}-1)^{1/2}
\\&=\nonumber
\|\bu_0\|_{L^2}-C\alpha(e^{CT}-1)^{1/2}.
\end{align}
Here, we have used \eqref{Euler_energy_equality}.
Let $\alpha>0$ be small enough that the right-hand side is positive (i.e., $\alpha< \|\bu_0\|_{L^2}/(C(e^{CT}-1)^{1/2})$.  Squaring, we obtain
\begin{align}\label{L2est}
\|\bu^\alpha(t)\|_{L^2}^2
\geq
\|\bu_0\|_{L^2}^2
-2C\alpha\|\bu_0\|_{L^2}(e^{CT}-1)^{1/2}
+C^2\alpha^2(e^{CT}-1),
\end{align}
for every $t\in[0,T]$.
Combining \eqref{L2est} and \eqref{EV_energy_equality}, we discover
 \begin{align*}
\alpha^2\|\nabla\bu^\alpha(t)\|_{L^2}^2
\leq
\alpha^2\|\nabla\bu_0\|_{L^2}^2
+2C\alpha\|\bu_0\|_{L^2}(e^{CT}-1)^{1/2}
-C^2\alpha^2(e^{CT}-1).
\end{align*}
Thus,
$
 \limsup_{\alpha\maps0^+}\sup_{t\in [0,T]} \alpha^2\|\nabla\bu^\alpha(t)\|_{L^2}^2=0,
$
which contradicts assumption \eqref{blowup_new}, and therefore the solution $\bu$, of the 3D Euler equations, is singular within the interval $[0,T]$.
\end{proof}

\subsection{Comparison with original criterion}
\noindent
We show that the new blow-up criterion \eqref{blowup_new}  is stronger than  \eqref{thm_blow_up_old}.
Since
\begin{align}
\sup_{t \in [0,T]} \alpha^2 \| \nabla u (t) \|_{L^2}^2 \ge \alpha^2 \| \nabla u (t) \|_{L^2}^2,
\end{align}
for any $t\in[0,T]$, we may take the $\limsup_{\alpha \to 0^+}$ of both sides to obtain
\begin{align}
\limsup_{\alpha \to 0^+} \sup_{t \in [0,T]} \alpha^2 \| \nabla u (t) \|_{L^2}^2 \ge \limsup_{\alpha \to 0^+} \alpha^2 \| \nabla u (t) \|_{L^2}^2.
\end{align}
The left-hand side is constant, and the right-hand side depends on t. Thus,
\begin{align}
\limsup_{\alpha \to 0^+} \sup_{t \in [0,T]} \alpha^2 \| \nabla u (t) \|_{L^2}^2 \ge  \sup_{t \in [0,T]} \limsup_{\alpha \to 0^+} \alpha^2 \| \nabla u (t) \|_{L^2}^2.
\end{align}
Therefore, if the right-hand side is positive, the left-hand side is positive.  Hence, \eqref{old_blow_up} implies \eqref{blowup_new}.

\section*{Acknowledgments}
The work of E.S.T was supported in part by ONR grant number N00014-15-1-2333, and by the NSF grants number DMS-1109640 and DMS-1109645.

\section*{Bibliography}

\bibliographystyle{abbrv}
\def\cprime{$'$}

\end{document}